\newcommand{\ba}{\begin{eqnarray}}
\newcommand{\ea}{\end{eqnarray}}
\newcommand{\qe}{\end{equation}}
\newcommand{\M}{\mathcal{M}_g}
\newcommand{\HH}{H^1_{-1}(\hat{X},\mathbb{C})}
\newcommand{\PS}{\mathcal{P}_g}
\newcommand{\NRH}{H^1_{-1}(\hat X,\Sigma,\mathbb{C})}
\newcommand{\NH}{H^1_{-1}(\hat X,\mathbb{C})}
\newtheorem{theorem}{Theorem}[section]
\newtheorem{remark}[theorem]{Remark}
\newtheorem{proposition}[theorem]{Proposition}
\begin{document}
\title[Avila-Gou\"ezel-Yoccoz and Teichm\"uller]{A comparison between Avila-Gou\"ezel-Yoccoz norm and  Teichm\"uller norm}

\author{Weixu Su}
\author{Shenxing Zhang}

\address{Weixu Su: School of Mathematics, Sun Yat-sen University, Guangzhou 510275, China}
\email{\href{mailto:suwx9@mail.sysu.edu.cn}
{suwx9@mail.sysu.edu.cn}}
\address{Shenxing, Zhang: School of Mathematical Sciences, Fudan University, Shanghai 200433, China}
\email{\href{mailto:21110180030@m.fudan.edu.cn}
{21110180030@m.fudan.edu.cn}}

\begin{abstract}
We give a comparison between the Avila-Gou\"ezel-Yoccoz
norm and the Teichm\"uller norm on the principal stratum of
holomorphic quadratic differentials.

 \medskip

\noindent {\bf Keywords:} Avila-Gou\"ezel-Yoccoz
norm; quadratic differentials; Teichm\"uller norm. 

\medskip

\noindent  {\bf MSC2020:} {30F30, 30F60.}
\end{abstract}
\maketitle

\section{Introduction}

Let $X$ be a compact Riemann surface of genus $g$.
A holomorphic quadratic differential $q$ on $X$ is a tensor given locally by an expression $q=q(z) d z^{2}$ where $z$ is a conformal coordinate on $X$ and $q(z)$ is holomorphic. Such a (nonzero) quadratic differential $q$  defines a flat metric $|q|^{1/2}$ on $X$. This metric has conical singularities at zeroes of $q$. Its area
is defined by
$$\|q\|=\int_{X} |q(z)| |dz|^2 .$$

Fix $g\geq 2$ and let $\PS$ be the \emph{principal
stratum} of the moduli space of quadratic differentials,
consisting of isomorphism classes of holomorphic quadratic
differentials $(X,q)$ with $4g-4$ distinct simple zeroes.

There is a Finsler metric on $\PS$ called \emph{AGY metric}, which was introduced by Avila-Gou\"ezel-Yoccoz \cite[\S 2.2.2]{AGY}.
This norm plays an important role in the study of Teichm\"uller flow. See \cite{AG,AGY,EM}.

Let $\M$ be the moduli space of Riemann surfaces
of genus $g$. Let $\pi: \PS \to \M$ be the natural projection,  defined by $\pi(X,q)=X$.
In the note, we consider the derivative of $\pi$ and compare the AGY
norm  with the Teichm\"uller norm.

Recently, Kahn-Wright \cite{KW21} derived  a comparison between  the Hodge norm (another important norm on $\PS$) and the Teichm\"uller norm.
Our research is motivated by their work.

\bigskip

For each $(X,q)\in \PS$,
there is a \emph{canonical double cover} $\rho: \hat X \to X$, ramified at the odd zeros of $q$,
such that $\rho^*q$ is the square of an Abelian differential $\omega$ on $\hat X$.
See \cite{DH} or \cite[\S 2]{L} for details.
The Abelian differential $\omega$ is a
$-1$ eigenvector for the holomorphic involution
$\tau: \hat X \to \hat X$ that permutes the sheets of the double cover,
that is, $$\tau^*\omega=-\omega.$$ We can
identify the tangent space of $\PS$ at
$(X,q)$ as $\HH$,  the $-1$ eigenspace
for the action of $\tau$ on the cohomology group $H^1(\hat X, \mathbb{C})$.

Every element of $H^1(\hat X, \mathbb{C})$ can be represented uniquely by a harmonic one-form. Consequently,
there is a natural decomposition of $\HH$ into
$H_{-1}^{1,0}(\hat{X})\oplus H_{-1}^{0,1}(\hat{X}).$
Note that the kernel of $D \pi$ is $H_{-1}^{1,0}(\hat{X})$. See
Theorem \ref{KW:diff} below.

We consider $\eta \in H_{-1}^{0,1}(\hat{X})$ and compare the AGY norm of $\eta$ with the Teichm\"uller norm of $D \pi(\eta)$.
The main result is

\begin{theorem}\label{thm:main}
Let $(X,q)\in \PS$ with area $\|q\|=1$.
 Let  $\rho: \hat X\to X$ be the canonical double cover
 such that $\rho^*q=\omega^2$.  Then for any $\eta \in H_{-1}^{0,1}(\hat{X})$,
we have
\begin{equation}\label{equ:main}
  \frac{r}{2\sqrt{2}} \ {\|\eta\|_{\mathrm{AGY}}}\leq {\|D \pi(\eta)\|_{\mathrm{Teich}}} \leq \frac{8}{\sqrt{\pi}r} \|\eta\|_{\mathrm{AGY}},
\end{equation}
where  $2 r$ is the shortest length of saddle connections on $(\hat{X}, \omega)$.
\end{theorem}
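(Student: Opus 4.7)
The plan is to interpret both norms through a common Hodge-theoretic pairing on $\hat X$, and then compare them using flat-geometric mean value estimates on embedded disks of $\omega$-radius $r$.

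First, I would make the derivative explicit. Since $\tau^*\omega=-\omega$ and $\tau^*\eta=-\eta$, the ratio $\mu:=\eta/\omega$ is a $\tau$-invariant (measurable) Beltrami differential on $\hat X$ that descends to one on $X$, and its class in $T_X\M$ is $D\pi(\eta)$. The assignment $q'\mapsto\omega'':=\rho^*q'/\omega$ yields an isomorphism $Q(X)\cong H^{1,0}_{-1}(\hat X)$; a direct computation gives
\[
\int_X\mu\, q' \;=\; -\tfrac{1}{2}\int_{\hat X}\eta\wedge\omega'',\qquad \|q'\|_{1} \;=\; \tfrac{1}{2}\int_{\hat X}|\omega||\omega''|\,dA.
\]
Consequently the Teichm\"uller duality becomes
\[
\|D\pi(\eta)\|_{\mathrm{Teich}} \;=\; \sup_{0\neq\omega''\in H^{1,0}_{-1}(\hat X)}\;\frac{\bigl|\int_{\hat X}\eta\wedge\omega''\bigr|}{\int_{\hat X}|\omega||\omega''|\,dA},
\]
reducing both inequalities to bounds on this single ratio.

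For the upper bound I would establish a pointwise estimate $|\eta(p)|\leq (C/r)\,\|\eta\|_{\mathrm{AGY}}\,|\omega(p)|$ at every $p\in\hat X$ whose $\omega$-distance to the zeros of $\omega$ is at least $r$. The shortest-saddle-connection hypothesis ensures that a flat disk of $\omega$-radius $r$ around such $p$ embeds; in its flat coordinate $\omega=dw$ and $\eta=f(\bar w)\,d\bar w$ with $f$ antiholomorphic, and the bound follows from the subharmonic mean value inequality $|f(0)|^2\leq (\pi r^2)^{-1}\int_{D_r}|f|^2\,dA$ combined with a flat-geometric control of $\|f\|_{L^2(D_r)}$ by the AGY norm. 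Substituting into the pairing and using $\int_{\hat X}|\omega||\omega''|\,dA=2\|q'\|_1$,
\[
\bigl|\textstyle\int_X\mu\, q'\bigr|\;\leq\;\textstyle\int_{\hat X}|\eta||\omega''|\,dA\;\leq\;\frac{2C}{r}\|\eta\|_{\mathrm{AGY}}\|q'\|_{1},
\]
and taking the supremum over $q'$ yields the upper bound, with $C=4$ producing the stated constant $8/\sqrt\pi$. Near the zeros of $\omega$ the pointwise estimate breaks down, but the contribution from these neighborhoods is controlled separately because $|\eta|$ is bounded there while the measure $|\omega||\omega''|\,dA$ is small.

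For the lower bound, given a saddle connection $\gamma$ on $\hat X$ of $\omega$-length $\ell$, I would construct a test differential $\omega''\in H^{1,0}_{-1}(\hat X)$ whose pairing with $\eta$ captures the period $\int_\gamma\eta$. After rotating $\omega$ so that $\gamma$ becomes horizontal, the rectangular tube $R=\gamma\times(-r,r)$ embeds in $\hat X$; Fubini combined with the mean value property for the antiholomorphic function $f$ along vertical segments of length $2r$ gives $\int_R f\,dA=2r\int_\gamma\eta$. Choosing $\omega''$ to approximate $\mathbf{1}_R\cdot dw$ in a holomorphic sense then produces $|\int\eta\wedge\omega''|\asymp 2r\,|\int_\gamma\eta|$ against $\int|\omega||\omega''|\,dA\asymp 2r\ell$, and the constant $r/\sqrt 2$ emerges once the holomorphic projection step is done carefully.

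The main obstacle is the lower bound: the natural test object $\mathbf{1}_R\cdot dw$ is not holomorphic, and projecting it onto $H^{1,0}_{-1}(\hat X)$ (by Hodge decomposition or via a Cauchy kernel) spreads it across $\hat X$ and threatens the concentration on which the argument depends. Producing a holomorphic $\omega''$ with both the required concentration and the clean factor $\sqrt 2$ is the technical heart of the proof; it likely requires either an explicit construction matched to the flat structure of $\omega$ near the endpoints of $\gamma$, or a Hubbard-Masur-type argument realizing $\gamma$ as an extremal period. Attaining the precise constant $8/\sqrt\pi$ in the upper bound similarly demands a sharp version of the AGY-to-$L^2$ mean value estimate on embedded flat disks.
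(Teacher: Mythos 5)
Your reduction of $\|D\pi(\eta)\|_{\mathrm{Teich}}$ to the ratio $\bigl|\int_{\hat X}\eta\wedge\omega''\bigr|/\int_{\hat X}|\omega||\omega''|\,dA$ is consistent with Theorem \ref{KW:diff}, but both halves of the argument then rest on steps that are not established and are not routine. For the upper bound, everything hinges on the pointwise estimate $|\eta(p)|\le (C/r)\|\eta\|_{\mathrm{AGY}}|\omega(p)|$, which you derive from the mean value inequality on an embedded disk $D_r(p)$ together with ``a flat-geometric control of $\|f\|_{L^2(D_r)}$ by the AGY norm.'' That control is exactly what is missing: the AGY norm records only periods over saddle connections, which join zeros of $\omega$ and hence never lie inside a disk centered away from $\Sigma$, so there is no local mechanism converting saddle-connection periods into an $L^2$ bound on $D_r$. (One would need either a global inequality of the form Hodge $\lesssim$ AGY, which is not available here --- Proposition \ref{prop:lower} goes in the opposite direction --- or an extension of the AGY control to periods over arbitrary geodesic chords, which you do not supply.) The treatment of the neighborhoods of the zeros, where ``$|\eta|$ is bounded'' and ``the measure is small,'' is likewise unsubstantiated, and there is no visible route to the specific constant $8/\sqrt{\pi}$. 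The paper avoids all of this with a different device: it takes a Delaunay triangulation of $(\hat X,\omega)$, maps each triangle affinely to the corresponding triangle of $\omega+t\eta$, and bounds the resulting Beltrami coefficient using the Masur--Smillie geometry (circumscribed radius at most $\sqrt{4/\pi}$ away from the tall cylinders, angles bounded away from $0$ and $\pi$ inside them); the numerator is controlled directly by $\|\eta\|_{\mathrm{AGY}}$ because the edges of the triangulation are saddle connections.

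For the lower bound you have, by your own account, not closed the argument: the holomorphic projection of $\mathbf{1}_R\,dw$ is precisely the point where the construction succeeds or fails, and you leave it as ``the technical heart.'' Two further warning signs. First, the tube $R=\gamma\times(-r,r)$ need not embed: a zero of $\omega$ can lie at distance much less than $r$ from the interior of a long saddle connection without creating any saddle connection shorter than $2r$, so singularities can enter $R$. Second, your displayed asymptotics $\bigl|\int\eta\wedge\omega''\bigr|\asymp 2r\bigl|\int_\gamma\eta\bigr|$ against $\int|\omega||\omega''|\,dA\asymp 2r\ell$ would give a lower bound of order $\|\eta\|_{\mathrm{AGY}}$ with no factor of $r$ at all --- a strictly stronger statement than the theorem --- which shows the heuristic cannot be taken at face value; the loss you are not accounting for is the delocalization caused by the projection. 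The paper's route is much shorter and constructs no test differential: it quotes Kahn--Wright's inequality $\|D\pi(\bar\beta)\|_{\mathrm{Teich}}\ge\|\beta\|_{\mathrm{Hodge}}/\|\omega\|_{\mathrm{Hodge}}$ (Theorem \ref{thm:KW}, with $\|\omega\|_{\mathrm{Hodge}}=\sqrt 2$) and then proves $\|\bar\beta\|_{\mathrm{AGY}}\le\|\beta\|_{\mathrm{Hodge}}/r$ by splitting each saddle connection into the part within distance $r$ of its endpoints and the rest, the latter controlled by the pointwise bound $|\beta/\omega|\le\|\beta\|_{\mathrm{Hodge}}/(\sqrt{\pi}r)$ valid at distance at least $r$ from $\Sigma$. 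Note that this last estimate is the legitimate version of the mean-value bound you were reaching for --- but it applies to the holomorphic form $\beta$, whose $L^2$ norm \emph{is} the Hodge norm, not to a form controlled only in the AGY norm.
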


\begin{remark}
Note that the area of $\omega$ is $2$.
\end{remark}

The paper has the following structure.
In \S 2, we present some basic properties of quadratic differentials.
The upper bound in
\eqref{equ:main} is proved in \S 3,  where we use the Delaunay triangulation
of quadratic differential
to construct quasiconformal maps with explicit Beltrami differentials.
In \S 4,
we give an upper bound of the AGY norm in terms of the Hodge norm,
 and then we derive the lower bound in
\eqref{equ:main} from Kahn-Wright\cite[Theorem 1.4] {KW21}.

\section{Preliminaries}

\subsection{The moduli space of quadratic differentials}

Let $g \geq 2$. We denote by $\M$ the moduli space of compact Riemann surfaces of genus $g$.
For $X\in \M$, the cotangent space of $\M$ at $X$ is canonically identified with the space
$Q(X)$ of holomorphic quadratic differentials  on $X$.
We define the $L^1$-norm on $Q(X)$ by
$$\|q\|=\int_X |q| .$$

A tangent vector of $\M$ at $X$ is represented by a Beltrami differential $\mu$. There is a natural pairing between quadratic differentials and Beltrami differentials given by
$$\langle \mu, q\rangle= \int_X \mu  q .$$

The \emph{Teichm\"uller norm} of $\mu$ is defined by
$$\|\mu\|_{\mathrm{Teich}}=\sup_{\|q\|=1} \operatorname{Re} \  \langle \mu, q\rangle.$$
This gives the infinitesimal form of the Teichm\"uller metric
on $\M$.

\medskip

Let $\mathcal{Q}_g$ be the moduli space of  quadratic differentials,
consisting of pairs $(X,q)$
where $X$ is a compact Riemann surface of genus $g$ and $q$ is a holomorphic quadratic
differential on $X$. The moduli space
$\mathcal{Q}_g$ has a stratified structure:
given a positive integral vector
$\kappa=\left(\kappa_1,\cdots,\kappa_n\right)$ with $\sum \kappa_i=4g-4$,
we let $\mathcal{Q}_g(\kappa)\subset \mathcal{Q}_g$
be the set of quadratic differentials $(X,q)$ where $q$
has $n$ zeros of order $\kappa_1,\cdots, \kappa_n$.

In the paper, our study is mainly restricted on the principal stratum,
consisting of those quadratic
differentials all of whose zeros are simple. We denote the principal stratum by $\PS$.
This stratum is both open and dense in $\mathcal{Q}_g$.

\subsection{Canonical double cover}

Let $\mathcal{Q}_g(\kappa)$ be a stratum of quadratic differentials.
Given $(X,q)\in\mathcal{Q}_g(\kappa)$, let $\rho: \hat X \to X$
be the canonical double cover such that the pull-back
$\rho^*q$ becomes the square of an Abelian differential $\omega$ on $\hat X$.
Let $\tau: \hat X \to \hat X$ be the  involution
that permutes the sheets of the double cover.
By the construction, $\tau^*\omega=-\omega$.

Let $\Sigma$ be the set of zeros of $\omega$.
 Denote by $H^1_{-1}(\hat X,\Sigma,\mathbb{C})$
the $-1$ eigenspace for the action of $\tau$
on  the relative homology
group $H^1(\hat X,\Sigma,\mathbb{C})$.
Note that the relative cohomology class of $\omega$
 is an element of $H^1_{-1}(\hat X,\Sigma,\mathbb{C})$.
A neighborhood of $\omega$ in $\NRH$
gives a local chart of $q$ in the stratum,
via the period mapping.

In the following, we shall identify the tangent space at $(X,q)$
with the cohomology $\NRH$. If $(X,q)\in \PS$,
then $q$ has no zeros of even order. In this case,
since $\Sigma$ is the set of fixed points of $\tau$,
we have
$$\NRH \cong \NH.$$
Thus each element of $\NH$ can be uniquely represented by a harmonic $1$-form.

\medskip

The following result describes the tangent map of
$\pi: \PS \to \M$ in terms of the period coordinates.
It is proved by Kahn-Wright \cite[Corollary 1.2]{KW21}.

\begin{theorem}\label{KW:diff}
Consider the projection $\pi: \PS \to \M$. Let $(X,q)\in \PS$ and let $\eta$ be a harmonic $1$-form on $\hat X$ that represents an element of $\NH$.
Then for any $\phi\in Q(X)$, we have

$$\langle D\pi(\eta), \phi\rangle=\frac{1}{2}\int_{\hat X} \rho^*(\phi)\frac{\eta^{0,1}}{\omega},$$
where $\eta^{0,1}$ is the anti-holomorphic part of $\eta$.

\end{theorem}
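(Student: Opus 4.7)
The plan is to realize $D\pi(\eta)$ concretely as a Beltrami differential on $X$, and then rewrite the pairing with $\phi$ as an integral over the double cover $\hat X$. The upshot will be that the $\tau$-invariant Beltrami differential $\eta^{0,1}/\omega$ on $\hat X$ descends to a Beltrami differential on $X$ representing $D\pi(\eta)$, after which the stated formula follows by change of variables.

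First I would analyse the period map near $(X,q)$. Fix a flat coordinate $z$ on $\hat X$ with $\omega = dz$, and decompose the harmonic representative as $\eta = f\,dz + g\,d\bar z$, so that $\eta^{1,0} = f\,dz$ and $\eta^{0,1} = g\,d\bar z$. The tangent vector $\eta$ deforms the period coordinates via $\omega \mapsto \omega + t\eta$. Seeking a local coordinate change $w = z + tv + O(t^{2})$ which makes $\omega + t\eta$ a holomorphic multiple of $dw$ leads to the first-order Beltrami equation $v_{\bar z}=g$, so the infinitesimal Beltrami differential on $\hat X$ is
$$\hat\mu \ := \ \frac{\eta^{0,1}}{\omega} \ = \ g\,\frac{d\bar z}{dz}.$$
Only the $(0,1)$-part of $\eta$ contributes, consistently with the statement that $\ker D\pi = H^{1,0}_{-1}(\hat X)$.

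Next I would check that $\hat\mu$ descends. Since $\tau^{*}\omega = -\omega$ and $\tau^{*}\eta = -\eta$, I get $\tau^{*}\eta^{0,1} = -\eta^{0,1}$, hence $\tau^{*}\hat\mu = \hat\mu$, so there is a unique Beltrami differential $\mu$ on $X$ with $\rho^{*}\mu = \hat\mu$; by construction $\mu$ represents $D\pi(\eta)$. Finally, since $\rho$ is a $2$-to-$1$ branched cover and $\rho^{*}\phi$ is well defined on $\hat X$,
$$\langle D\pi(\eta),\phi\rangle \ = \ \int_{X} \mu\,\phi \ = \ \tfrac{1}{2}\int_{\hat X}\rho^{*}(\mu\,\phi) \ = \ \tfrac{1}{2}\int_{\hat X}\rho^{*}(\phi)\,\frac{\eta^{0,1}}{\omega},$$
which is the desired identity.

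The main obstacle is justifying rigorously that the harmonic representative yields a well-defined Beltrami differential, independent of auxiliary choices. Exact shifts $\eta \to \eta + d\psi$ alter $\hat\mu$ by $\bar\partial\psi/\omega$, which must be recognised as an infinitesimally trivial Beltrami differential; one also needs to control the behaviour near $\Sigma$, where the flat coordinate $z$ is singular and $\eta^{0,1}/\omega$ is a priori defined only away from $\Sigma$. In the principal stratum, zeros of $\omega$ are simple and the $-1$ eigenspace condition, combined with the local normal form $\omega = \zeta\,d\zeta$ near a ramification point, shows that $\hat\mu$ extends to a bounded Beltrami differential on all of $\hat X$; this is where the harmonic decomposition $H^{1}_{-1}(\hat X,\mathbb{C}) = H^{1,0}_{-1}(\hat X)\oplus H^{0,1}_{-1}(\hat X)$ plays its role.
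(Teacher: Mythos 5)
The paper offers no proof of this theorem; it is quoted verbatim from Kahn--Wright \cite[Corollary 1.2]{KW21}. Your strategy---differentiate the period deformation $\omega\mapsto\omega+t\eta$ to obtain the infinitesimal Beltrami differential $\eta^{0,1}/\omega$ on $\hat X$, observe that it is $\tau$-invariant (since both $\eta^{0,1}$ and $\omega$ are anti-invariant) and hence descends to $X$, and then unfold the pairing over the degree-two cover to produce the factor $\tfrac12$---is the standard route and matches the cited source in outline. The local computation $w=z+th$ with $dh=\eta$, giving first-order Beltrami coefficient $g\,d\bar z/dz=\eta^{0,1}/\omega$ away from $\Sigma$, and the identity $\int_{\hat X}\rho^*(\mu\phi)=2\int_X\mu\phi$, are both correct.

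However, the step you yourself single out as the main obstacle is resolved incorrectly. In the principal stratum every zero of $q$ is simple, hence odd, hence a branch point of $\rho$; in a coordinate $w$ with $\zeta=w^2$ one has $\rho^*q=4w^4\,dw^2$, so $\omega=2w^2\,dw$ has a \emph{double} zero at each ramification point, not the simple zero $\omega=\zeta\,d\zeta$ you assert. Moreover the anti-invariance $\tau^*\eta=-\eta$ only forces the coefficient of $\eta^{0,1}=\overline{b(w)\,dw}$ to be even in $w$; it does not force $b(0)=0$, so $|\eta^{0,1}/\omega|\sim|b(0)|/(2|w|^{2})$ is genuinely unbounded near $\Sigma$ in general, and your claimed bounded extension of $\hat\mu$ fails. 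The formula survives for a different reason: $\rho^*\phi=4w^2\varphi(w^2)\,dw^2$ also vanishes to order $2$ at each ramification point, so the quotient $\rho^*(\phi)/\omega=2\varphi(w^2)\,dw$ is a \emph{holomorphic} $1$-form on all of $\hat X$, the integrand $\rho^*(\phi)\,\eta^{0,1}/\omega=2\varphi(w^2)\overline{b(w)}\,dw\,d\bar w$ extends smoothly, and the integral converges. Finally, to identify this convergent integral with $\langle D\pi(\eta),\phi\rangle$ one must replace the period comparison map by an honest quasiconformal map near $\Sigma$; this changes the Beltrami differential by $\bar\partial V$ for a vector field $V$ supported near $\Sigma$, and $\int_{\hat X}\rho^*(\phi)\,\bar\partial V/\omega=0$ by Stokes' theorem precisely because $\rho^*(\phi)/\omega$ is holomorphic. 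The same Stokes argument disposes of your worry about exact shifts $\eta\mapsto\eta+d\psi$. With these corrections your argument is complete and agrees with the source.
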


\subsection{The AGY norm.}

The AGY norm is defined by Avila-Gou\"ezel-Yoccoz \cite{AGY} on any stratum of Abelian differentials.

With the notations in \S 2.2, we consider the Abelian differential $\omega$ as an element of $H^1(\hat X,\Sigma,\mathbb{C})$. A saddle connection of $\omega$ is a geodesic segment for the flat metric defined by $|\omega|$
joining two zeros of $\omega$ and not passing any zero in its interior.
Each saddle connection $\gamma$ gives rise to an element $[\gamma]$ of
the homology $H_1(\hat X,\Sigma,\mathbb{C})$. And the set of saddle connections generates the the homology $H_1(\hat X,\Sigma,\mathbb{C})$.
Denote by $\left\{\gamma_{j}\right\}$ the set of saddle connections on $\omega$.

For any $[\eta]\in H^1(\hat X,\Sigma,\mathbb{C})$,
its AGY norm is defined by
\begin{equation*}
    \|\eta\|_{\mathrm{AGY}}=\sup_{\gamma_j}  \frac{\left|\int_{\gamma_{j}} \eta\right|}{\left|\int_{\gamma_{j}} \omega\right|} ,
\end{equation*}
where the supremum is taken over all saddle connections.

Avila-Gou\"ezel-Yoccoz \cite[\S 2.2.2]{AGY} showed that the AGY norm is continuous and induces
a complete metric on each stratum.

\section{The upper bound}
In this section, we give an upper bound of
$\|D\pi(\eta)\|_{\mathrm{Teich}}$
in terms of
$\|\eta\|_{\mathrm{AGY}},$
for any $\eta\in H^1_{-1}(\hat X, \mathbb{C})$.
The idea is to triangulate the surface and
compute the Beltrami differentials of maps that are affine on each triangle.
We remark that the proof  applies to any other stratum of quadratic differentials or Abelian differentials.

\subsection{Delaunay triangulation}
Given a quadratic differential $(X,q)$,
there is an associated flat metric (with conical singularities)
on $X$, defined by $|q|^{1/2}$.
Denote by $\Sigma$ the set of zeros of $q$.
For any $x\in X$,
let $d(x, \Sigma)$ be the minimal $|q|^{1/2}$-distance from $x$ to $\Sigma$.


The next result is proved by Masur-Smillie  \cite[\S 4]{MS}.
See also Farb-Masur \cite[Proposition 3.1]{FM}.

\begin{proposition}
Let $(X,q)$ be a holomorphic quadratic differential of area $\|q\|\leq 1$. There is a triangulation $\Delta$ on $X$ with the following properties:
\begin{enumerate}
  \item The vertices of $\Delta$ lie in the zero set of $q$.
  \item The edges of $\Delta$ are saddle connections of $q$.
  \item Each triangle is inscribed in a circle of radius $d(x, \Sigma)$ for some $x\in X$.
\end{enumerate}
\end{proposition}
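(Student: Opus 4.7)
The plan is to construct the triangulation as the dual of the \emph{Voronoi tessellation} of $(X,|q|^{1/2})$ with vertex set $\Sigma$. For each $\sigma\in\Sigma$, I would set
\[
V_\sigma \;=\; \{x\in X : d(x,\sigma)\le d(x,\sigma')\text{ for all }\sigma'\in\Sigma\}.
\]
In any flat coordinate chart disjoint from $\Sigma$, the equidistant locus $\{d(\cdot,\sigma)=d(\cdot,\sigma')\}$ is a union of straight perpendicular bisectors of the various straight lifts of segments from $\sigma$ to $\sigma'$, so each $V_\sigma$ is a locally convex geodesic polygon. Finiteness of the total area together with positivity of the injectivity radius on compacta away from $\Sigma$ would then be used to show that $X=\bigcup_\sigma V_\sigma$ is a finite cell decomposition whose cells are closed topological disks.

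Next I would pass to the dual \emph{Delaunay decomposition}. A Voronoi vertex $v$ is a point at which $k\ge 3$ cells meet; equivalently, $v$ has $k$ nearest zeros $\sigma_1,\dots,\sigma_k$, all lying on the flat circle of radius $r_v := d(v,\Sigma)$ about $v$. By definition the $\sigma_i$ are the vertices of the Delaunay face dual to $v$, so condition (3) is built in. An edge of this face joins two zeros $\sigma_i,\sigma_j$ whose cells share a boundary arc through $v$; the straight segment from $\sigma_i$ to $\sigma_j$ lies in $V_{\sigma_i}\cup V_{\sigma_j}$ and passes through $v$, and it cannot meet a third zero $\sigma''$ since any such interior point would be strictly closer to $\sigma''$ than to $\sigma_i$ or $\sigma_j$. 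This gives (1) and (2). In the non-generic situation where four or more zeros are cocircular about some $v$, the dual face is a polygon with more than three sides; I would then subdivide it by inserting diagonals between its vertices. Because the entire face sits in the convex flat disk bounded by its circumscribing circle of radius $r_v$, each such diagonal is again a saddle connection and each resulting triangle is still inscribed in that circle, so (1)--(3) persist after refinement.

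The main technical obstacle is controlling the global topology of the Voronoi cells and the straightness of their dual edges in the presence of the cone singularities of $q$. The key input is that at every zero of $q$ the cone angle is a positive integer multiple of $\pi$ strictly larger than $2\pi$ (exactly $3\pi$ in the principal stratum), which prevents two distinct $|q|^{1/2}$-minimizing segments from the same non-singular point to $\Sigma$ from coalescing near a zero; combined with the standard fact that the number of saddle connections of bounded length on a compact translation/half-translation surface is finite, this underlies the Masur--Smillie argument and yields that $\{V_\sigma\}$ is indeed a finite cell complex whose dual satisfies the three listed properties.
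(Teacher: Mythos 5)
The paper does not actually prove this proposition: it is quoted from Masur--Smillie \cite{MS} (see also \cite{FM}), and the Voronoi/Delaunay duality you describe is precisely the construction used there. So you are reproducing the cited argument rather than taking a different route; the question is only whether your version of it is watertight.

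It is not quite, and the gap is in the very first definition. Setting $V_\sigma=\{x\in X: d(x,\sigma)\le d(x,\sigma')\text{ for all }\sigma'\in\Sigma\}$ compares distances to \emph{distinct zeros}, whereas the correct combinatorics must compare \emph{distinct length-minimizing segments from $x$ to $\Sigma$}. With your definition the decomposition can fail to be a cell decomposition: the proposition allows $q$ to have a single zero (say one zero of order $4g-4$), in which case $V_\sigma=X$; and even when $\Sigma$ is large, a cell $V_\sigma$ can contain points of the cut locus of $\sigma$ itself (for instance deep inside a long flat cylinder whose boundary singularities are all equal to $\sigma$), so $V_\sigma$ need not be a disk and the dual structure you describe is undefined there. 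The standard repair, and the one Masur--Smillie use, is to stratify $X$ by the number of shortest segments from $x$ to $\Sigma$: the $2$-cells are the components of the locus with a unique such segment, the $1$-cells where there are exactly two, the $0$-cells where there are at least three. Dual Delaunay cells are then indexed by these segments rather than by their endpoints, so a Delaunay triangle may have two or even three vertices at the \emph{same} zero; properties (1)--(3) survive, but your identification ``$k$ cells meet at $v$ if and only if $v$ has $k$ nearest zeros'' must be replaced by ``$v$ admits $k$ shortest segments to $\Sigma$''. Once this bookkeeping is in place, the remainder of your sketch is sound: the open disk of radius $r_v$ about a Voronoi vertex $v$ is an immersed flat disk containing no zeros, so the chords joining the feet of the shortest segments have singularity-free interiors and are saddle connections, and subdividing a cyclic Delaunay polygon by diagonals keeps every vertex on the circle of radius $r_v=d(v,\Sigma)$, preserving (1)--(3).
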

The above construction is called a \emph{Delaunay triangulation} of $q$.

Let $s=\sqrt{\frac{2}{\pi}}$, and let $B_s$ be the set of points in $X$
with $d(x,\Sigma)\leq s$.  By  the proof of \cite[Theorem 5.3]{MS},
the complement of $B_s$ is contained in a union of disjoint maximal flat cylinders,
with the property that their circumference is less than their height.

\subsection{The proof of upper bound}

Let $\eta\in H^1_{-1}(\hat X, \mathbb{C})$.
Denote by $(\hat X_t, \omega_t)$ the family of
Abelian differentials corresponding to the cohomology
classes
$\omega+t \eta\in H^1_{-1}(\hat X, \mathbb{C})$,
for sufficiently small $t>0$.

Let $\Delta$ be a Delaunay triangulation of
$(\hat X, \omega)$. By the construction,
the vertices of $\Delta$ are the zeros of $\omega$, and the edges of $\Delta$ are saddle connections of $\omega$. For each $t$,
we can straighten $\Delta$ to be a triangulation
of $\hat X_t$ (not necessary Delaunay), denoted by $\Delta_t$, such that
the edges  are saddle connections of $\omega_t$.

The next step is to construct quasiconformal mappings
$f_t$ from $\hat X$ to $\hat X_t$ that are linear
on each triangle.
 Denote the Beltrami differentials of $f_t$ by
 $\mu_t$. Then
 $$D\pi(\eta)\cong\frac{d \mu_t}{dt}|_{t=0}.$$

\begin{proposition}\label{prop:upper}
Let  $2r$ be the shortest length of saddle connections on $(\hat{X}, \omega)$. Then
$$\left\| D\pi(\eta) \right\|_{\mathrm{Teich}} \leq \frac{8}{\sqrt{\pi}r} \|\eta\|_{\mathrm{AGY}} .$$
\end{proposition}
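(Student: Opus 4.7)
The strategy is precisely the one sketched just before the proposition. For $t$ near $0$, let $f_t\colon \hat X \to \hat X_t$ be the piecewise-affine map associated with the Delaunay triangulation $\Delta$ and its deformation $\Delta_t$; write $\mu_t$ for its Beltrami differential, so that $D\pi(\eta)$ is represented, after descent to $X$ via the double cover, by $\dot\mu := \partial_t \mu_t|_{t=0}$. Since the Teichm\"uller norm of a Beltrami differential equals its $L^\infty$-norm, it suffices to bound $|\dot\mu|$ pointwise on $\hat X$ by the right multiple of $\|\eta\|_{\mathrm{AGY}}$.

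The core computation is on a single Delaunay triangle $T$. Choose a flat chart in which the vertices of $T$ are $0$, $a$, $b\in\mathbb{C}$, and set $\delta_a = \int_{\gamma_a}\eta$, $\delta_b = \int_{\gamma_b}\eta$ for the edges $\gamma_a,\gamma_b$ emanating from $0$. The unique real-affine map sending $(0,a,b)$ to $(0,a+t\delta_a,b+t\delta_b)$ has Beltrami coefficient whose derivative at $t=0$ is
\begin{equation*}
\dot\mu = \frac{a\,\delta_b - b\,\delta_a}{a\,\bar b - b\,\bar a} \qquad \text{on } T.
\end{equation*}
The denominator has modulus $4\,\mathrm{Area}(T)$, and the AGY bounds $|\delta_a|\leq|a|\,\|\eta\|_{\mathrm{AGY}}$, $|\delta_b|\leq|b|\,\|\eta\|_{\mathrm{AGY}}$ control the numerator by $2|a||b|\,\|\eta\|_{\mathrm{AGY}}$. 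Applying the law of sines $\sin\theta = c/(2R)$ at the vertex $0$, where $R$ is the circumradius and $c$ the edge opposite $0$, this yields $|\dot\mu|\leq \tfrac{2R}{c}\|\eta\|_{\mathrm{AGY}}$ on $T$.

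For any Delaunay triangle whose circumcenter lies in $B_s$ with $s=\sqrt{2/\pi}$, the Masur--Smillie bound forces $R\leq s$, while $c\geq 2r$ since $c$ is a saddle connection of $\omega$. The complement lies in disjoint maximal flat cylinders of circumference less than height; on each such cylinder I would replace the Delaunay pieces by an affine interpolation in cylinder coordinates $(x,y)$, invoking the same linear-algebra formula with the circumferential period (of length $\ell\geq 2r$, realised as a sum of boundary saddle connections) in place of $a$ and a transverse saddle connection of length at least the cylinder height in place of $b$. Taking the pointwise supremum across both regions and tracking the factor of $2$ coming from the double cover $\rho$ (recall $\|\omega\|=2$) produces the stated constant $8/(\sqrt{\pi}\,r)$.

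The main technical obstacle I anticipate is the cylinder region: the Delaunay triangles there have unbounded circumradius so the per-triangle estimate fails, and the cylinder-coordinates interpolation must be analysed directly. Controlling the change of the circumferential period under $\eta$ requires the AGY estimate to be summed over a \emph{collection} of boundary saddle connections rather than applied to a single one, and one has to verify that this aggregate estimate folds cleanly into the final sup-norm bound without degrading the claimed constant.
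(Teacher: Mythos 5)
Your setup and your first case coincide with the paper's proof: the same piecewise-affine maps on the Delaunay triangulation, the same linear-algebra formula for the infinitesimal Beltrami coefficient (your $\frac{a\delta_b-b\delta_a}{a\bar b-b\bar a}$ is the paper's $\frac{\delta_a/a-\delta_b/b}{1-\bar b/b}$ after rotating so that $a$ is real), and the same combination of the law of sines, the lower bound $c\geq 2r$ on the opposite edge, and the Masur--Smillie circumradius bound. Up to bookkeeping of constants that part is fine.

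The gap is the cylinder case, which you correctly locate but do not close. You propose to discard the Delaunay triangles inside the tall cylinders and build a separate affine interpolation in cylinder coordinates, and you then flag the required aggregate period estimate as an unresolved ``technical obstacle''; that leaves the argument incomplete exactly where the work is. Moreover, your premise that the per-triangle estimate fails there is too pessimistic. The bound $|\dot\mu|\leq \|\eta\|_{\mathrm{AGY}}/\sin\theta$ is available for \emph{each} of the three angles $\theta$ of $T$, since the quantities $a\delta_b-b\delta_a$ and $a\bar b-b\bar a$ are independent of the choice of base vertex up to a common sign; so one only needs \emph{some} angle of $T$ bounded away from $0$ and $\pi$. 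The paper's observation is that if one edge of $T$ crosses a maximal cylinder whose height exceeds its circumference, then a second edge must cross it as well, so $T$ is a long thin triangle with a short base and has an angle in $[\pi/4,\pi/2]$; this yields $|\dot\mu|\leq\sqrt{2}\,\|\eta\|_{\mathrm{AGY}}$ on such triangles, which is absorbed into $\frac{8}{\sqrt{\pi}r}\|\eta\|_{\mathrm{AGY}}$ because $\pi r^2\leq 1$. No new interpolation is needed. If you insist on your route, you must actually construct the cylinder interpolation, verify it glues continuously to the affine maps on the neighbouring triangles, and carry out the summed period estimate you describe; as written, none of that is done.
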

\begin{proof}
Denote by $$\mu=\frac{d \mu_t}{dt}|_{t=0}.$$ Since $\|\mu\|_{\mathrm{Teich}}\leq \|\mu\|_\infty$,
it suffices to give the upper bound for $\|\mu\|_\infty$.

Let $T=\triangle OAB$ be any triangle of $\Delta$,
where $O,A,B$ denotes the vertices.
For simplicity, we consider $T$ as a triangle in the complex plane and put
$O=0, A=a>0$ and $B=b\in \mathbb{C}$.
By definition,
$$a=\int_{\gamma} \omega, b=\int_{\gamma'}\omega,$$
where $\gamma$ and $\gamma'$ denote the saddle connection connecting $O$ to $A$ and $O$ to $B$,
respectively.

\begin{figure}[htbp]
\centering
\includegraphics[width=10cm]{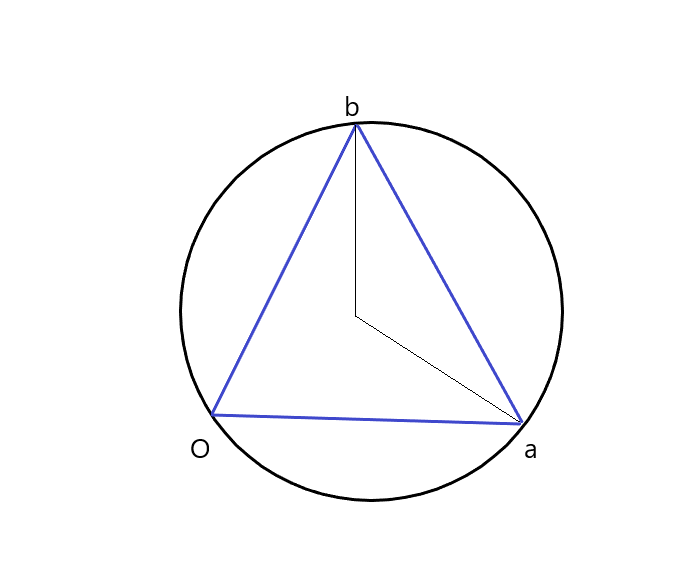}
\end{figure}

For each $t$ sufficiently small, the corresponding triangle in $\hat X_t$ has vertices given by $0$, $a + t \alpha$ and $b + t \beta$, where $$\alpha = \int_{\gamma} \eta,\beta = \int_{\gamma'} \eta.$$

Denote the associated affine mapping between the triangles by $$f_t(z) = Rz + S\bar{z}.$$

Then we have
\[
Ra+Sa = a +t\alpha,
\]
\[
Rb + S\bar b = b + t \beta.
\]

A simple computation shows that the Beltrami coefficient $\mu_t$ is equal to
\[
\frac{S}{R} = t\frac{\frac{\alpha}{a}-\frac{\beta}{b}}{1-\frac{\bar b}{b}}+o(t).
\]

Now we give an upper bound of
$$|\mu(z)|=\left|\frac{\frac{\alpha}{a}-\frac{\beta}{b}}{1-\frac{\bar b}{b}} \right|.$$

Assume that $\theta=\arg b$. Then
$$|1-\frac{\bar b}{b}|=2|\sin\theta|.$$

To give an upper bound of the quasiconformal dilatation, we discuss $\sin\theta$ in two cases.

Let $s_0=\sqrt{\frac{4}{\pi}}=\frac{2}{\sqrt{\pi}}$. We remark that the area of $|\omega|$ is $2$.
Note that for any edge of $T$, it either has length $\leq 2s_0$ or crosses a maximal flat cylinder
$C$ whose height $h$ is greater than its circumference $c$.

Assume that all edges of $T$ has length $\leq 2s_0$. In this case,  the triangle $T$ is inscribed in a circle of radius $d(x, \Sigma) \leq 2s_0.$

Since $\sin\theta=|a-b|/2 d(x,\Sigma)$, we have
$$|\sin\theta|\geq \frac{r}{d(x,\Sigma)}\geq \frac{\sqrt{\pi}r}{4}.$$
Thus we have
$$\left|\frac{\frac{\alpha}{a}-\frac{\beta}{b}}{1-\frac{\bar b}{b}} \right|\leq \frac{ 8 \max\{|\frac{\alpha}{a}|, |\frac{\beta}{b}|\}}{\sqrt{\pi}r}\leq \frac{8}{\sqrt{\pi}r} \|\eta\|_{\mathrm{AGY}}.$$

The remaining case is that some edge of $T$ crosses a maximal flat cylinder
$C$ whose height $h$ is greater than its circumference $c$.
In this case, some other edge of $T$ also crosses $C$.
Thus the triangle $T$ looks like an isosceles triangle with a short base.
As a result, we may choose the angle $\theta$ such that
$$\frac{\pi}{4}\leq \theta \leq \frac{\pi}{2}.$$
Then we have $\sin\theta \geq \frac{\sqrt{2}}{2}$.
It follows that
$$\left|\frac{\frac{\alpha}{a}-\frac{\beta}{b}}{1-\frac{\bar b}{b}} \right|\leq \frac{ 2 \max\{|\frac{\alpha}{a}|, |\frac{\beta}{b}|\}}{\sqrt{2}}\leq \sqrt{2} \|\eta\|_{\mathrm{AGY}}.$$

Note that $\pi r^2\leq 2$ and then $\sqrt{2}\leq \frac{2}{\sqrt{\pi}r}$.
This completes the proof. 

\end{proof}

\begin{remark}
It is known that for any quadratic differential $q$, in the the direction
of Teichm\"uller flow, the AGY norm is less than the Teichm\"uller norm
(see \cite[Page 152]{AGY}).

As we have shown in the proof of Proposition \ref{prop:upper},
the order $\frac{1}{r}$ appears when the triangle is almost flat.
If there is some angle of the triangle which is neither close to $0$ or $\pi$,
then the Beltrami coefficient should be bounded above by
$ \|\eta\|_{\mathrm{AGY}}$ up to a multiplicative constant.
\end{remark}

\subsection{The order $\frac{1}{r}$ in Proposition \ref{prop:upper} is sharp.}

We recall the following construction of Kahn-Wright \cite[\S 3.3]{KW21}. 

Let $\epsilon>0$ be a small constant. 
We take a square torus of length $1$ and make a length
$\epsilon$ horizontal slit. Then we identify the endpoints of the slit
to make a figure-eight and glue in a  cylinder with circumference $\epsilon$
and height $\epsilon$. The construction defines an Abelian differential
$(X_\epsilon,\omega_\epsilon)$ with one double
zero, i.e. a translation surface in $\mathcal{H}(2)$. 

\begin{figure}[htbp]
\centering
\includegraphics[width=8cm]{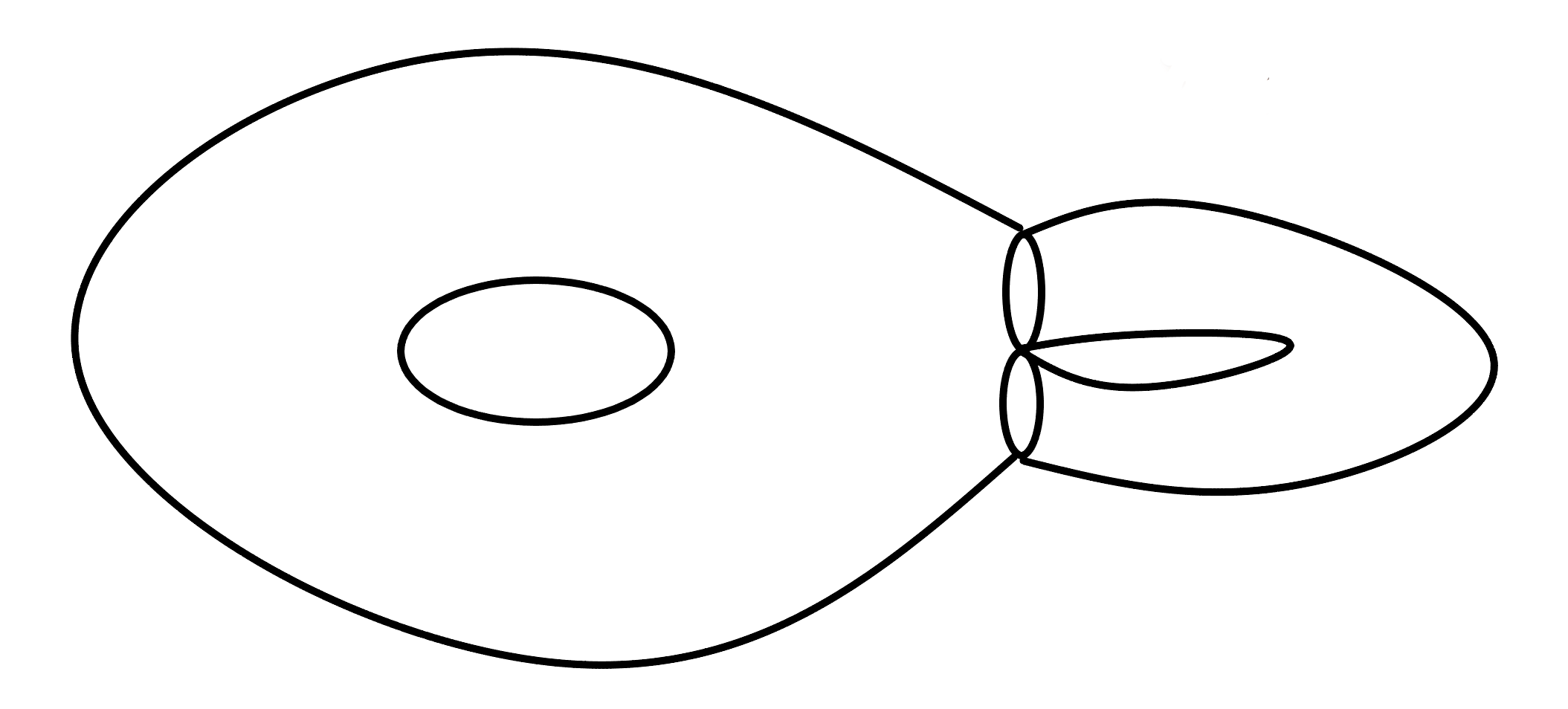}
\caption{The examples of Kahn-Wright \cite[\S 3.3]{KW21}.}
\end{figure}

Let ${\gamma}_{\epsilon}$ be the core curve of the small cylinder in $(X_{\epsilon},{\omega}_{\epsilon})$.
Denote the harmonic differential dual to $\gamma_{\epsilon}$ by ${\gamma}_{\epsilon}^*$.  

\begin{remark}
We can write $\gamma_\epsilon^*=\beta_\epsilon+\bar\beta_\epsilon$,
where $\beta_\epsilon$ is an Abelian differential. It is known that the Hodge norm
of $\beta_\epsilon$ is bounded above and below independently
of $\epsilon$.
\end{remark}

As shown by Kahn-Wright \cite[\S 3.3]{KW21}, 
$$\left\| D\pi({\gamma}_{\epsilon}^*) \right\|_{\mathrm{Teich}} \geq \frac{C}{\epsilon}$$
for some constant $C$.

The path $\omega_\epsilon+t \epsilon \gamma_\epsilon^*$ 
is corresponding to a family of translation surfaces, 
obtained by twisting along the core curve of the small cylinder.
When $t=1$, $\omega_\epsilon+ \epsilon \gamma_\epsilon^*$ is a Dehn twist of $\gamma_\epsilon$.

The length of shortest saddle connections of $\omega_\epsilon$ is equal to $\epsilon$.
If $\alpha_0$ is the shortest saddle connection contained in the small cylinder and crossing $\gamma_\epsilon$,
then 
$$\frac{\left|\int_{\alpha_0} \gamma_\epsilon^*\right|}{\left|\int_{\alpha_0} \omega_\epsilon \right|}=\frac{\epsilon}{\epsilon}=1.$$

For any other saddle connection $\alpha$, which crosses the small cylinder
 $n$  times, we have

 $$\frac{\left|\int_{\alpha} \gamma_\epsilon^*\right|}{\left|\int_{\alpha} \omega_\epsilon \right|}\leq \frac{n \epsilon}{ n\epsilon}=1.$$

As a result,  $\|{\gamma}_{\epsilon}^*\|_{\mathrm{AGY}}= 1$. In conclusion, we have

$$\left\| D\pi({\gamma}_{\epsilon}^*) \right\|_{\mathrm{Teich}} \geq C \frac{\|{\gamma}_{\epsilon}^*\|_{\mathrm{AGY}}}{\epsilon},$$
where $\epsilon$ is the length of shortest saddle connections of $\omega_\epsilon$.

\section{The lower bound}
In this section, we consider tangent vectors to $\PS$ of the form $\eta=\bar{\beta}$, where $\beta \in H_{-1}^{1,0}(\hat{X})$. By Theorem \ref{KW:diff}, the Beltrami differential $\mu=\bar{\beta} / \omega$ can be considered as the tangent vector $D \pi(\eta)$ via the pairing with holomorphic quadratic differentials
$$\int_{\hat{X}} \rho^*(\phi)\frac{ \bar{\beta}}{\omega}.$$

The Hodge norm of $\beta \in H_{-1}^{1,0}(\hat{X})$ is defined by
$$
\|\beta\|_{\mathrm{Hodge}}=\sqrt{\int_{\hat{X}}|\beta|^{2}} .
$$

We have (see \cite[Theorem 3.1]{KW21}):
\begin{theorem}\label{thm:KW}
For any $\eta=\bar{\beta} \in H_{-1}^{0,1}(\hat{X})$, we have
$$
\|D \pi(\eta)\|_{\mathrm{Teich}} \geq \frac{\|\beta\|_{\mathrm{Hodge}}}{\|\omega\|_{\mathrm{Hodge}}} \ .
$$
\end{theorem}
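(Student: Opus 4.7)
The plan is to produce a lower bound by testing the pairing given in Theorem \ref{KW:diff} against a carefully chosen holomorphic quadratic differential $\phi$ on $X$. Recall that
$$\|D\pi(\eta)\|_{\mathrm{Teich}} \geq \frac{\operatorname{Re}\,\langle D\pi(\eta),\phi\rangle}{\|\phi\|_{L^1(X)}}$$
for any nonzero $\phi\in Q(X)$, so the task is to exhibit a $\phi$ for which both the numerator can be computed exactly and the denominator controlled by $\|\beta\|_{\mathrm{Hodge}}\|\omega\|_{\mathrm{Hodge}}$.

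The natural guess is to take $\phi$ so that $\rho^*\phi=\beta\omega$. Since $\tau^*\beta=-\beta$ and $\tau^*\omega=-\omega$, the product $\beta\omega$ is $\tau$-invariant, so it descends to a quadratic differential on $X$; a short local computation in a ramification chart $z=\hat z^2$ (using that $\tau$-anti-invariance of $\beta$ forces its local expression $f(\hat z)d\hat z$ to have $f$ even, and $\omega$ vanishes to order $2$ at the ramification point) shows $\phi$ is holomorphic on $X$. With this choice,
$$\rho^*\phi\cdot\frac{\bar\beta}{\omega}\;=\;|\beta|^{2},$$
so Theorem \ref{KW:diff} gives
$$\langle D\pi(\eta),\phi\rangle\;=\;\tfrac12\int_{\hat X}|\beta|^{2}\;=\;\tfrac12\|\beta\|_{\mathrm{Hodge}}^{2},$$
which is real and positive.

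For the denominator, since $\rho$ is a degree-two cover,
$$\|\phi\|_{L^{1}(X)}\;=\;\tfrac12\int_{\hat X}|\rho^{*}\phi|\;=\;\tfrac12\int_{\hat X}|\beta||\omega|\;\leq\;\tfrac12\|\beta\|_{\mathrm{Hodge}}\|\omega\|_{\mathrm{Hodge}}$$
by Cauchy--Schwarz. Dividing the numerator by the denominator yields precisely the stated inequality $\|D\pi(\eta)\|_{\mathrm{Teich}}\geq \|\beta\|_{\mathrm{Hodge}}/\|\omega\|_{\mathrm{Hodge}}$.

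The only genuinely delicate step is verifying that $\phi$ is holomorphic (not merely meromorphic) at the branch points of $\rho$; this is the "hard part," but it is a local parity check at each ramification point, relying on the fact that in the principal stratum the ramification is exactly at the simple zeros of $q$, where $\omega$ has a double zero, which more than compensates for the potential pole produced by passing from $\hat z$ to $z=\hat z^2$. Once this is in place, the rest of the argument is just Cauchy--Schwarz together with the pairing formula of Theorem \ref{KW:diff}.
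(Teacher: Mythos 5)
Your proof is correct. Note that the paper itself does not prove this theorem; it is quoted directly from Kahn--Wright \cite[Theorem 3.1]{KW21}, and your argument is essentially a reconstruction of their proof: test the pairing of Theorem \ref{KW:diff} against the holomorphic quadratic differential $\phi$ on $X$ with $\rho^*\phi=\beta\omega$, compute $\langle D\pi(\eta),\phi\rangle=\tfrac12\|\beta\|_{\mathrm{Hodge}}^2$, and bound $\|\phi\|_{L^1}$ by Cauchy--Schwarz. Your local parity check at the ramification points is the right verification that $\phi$ descends holomorphically (one small quibble: the double zero of $\omega$ \emph{exactly} cancels, rather than more than compensates for, the order-one pole coming from $d\hat z^2 = dz^2/(4z)$, which is all that is needed). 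The only degenerate case, $\beta=0$, is trivial, so the argument is complete.
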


\begin{proof}[The lower bound in Theorem \ref{thm:main}]
	
The assumption  $\|q\|=1$ implies $\|\omega\|_{\mathrm{Hodge}}=\sqrt{2}.$
	Applying Theorem \ref{thm:KW}
	and the next proposition , we have
	
	\begin{eqnarray}\label{equ:lower}
		\|D \pi(\eta)\|_{\mathrm{Teich}} &\geq& \frac{\|\eta\|_{\mathrm{Hodge}}}{\sqrt{2}}
		\geq \frac{r}{2\sqrt{2}} \|\eta\|_{\mathrm{AGY}} .
	\end{eqnarray}

\end{proof}

\begin{proposition}\label{prop:lower}
Let $2r$ be the shortest length of  saddle connections. For any saddle connection $\gamma$ of $\omega$
and any  $\beta\in H_{-1}^{1,0}(\hat{X})$,
we have
$$\frac{\left| \int_\gamma \beta \right|}{\left| \int_\gamma \omega \right|} \leq 2 \frac{\|\beta\|_{\mathrm{Hodge}}}{r}.$$
As a result, for any $\eta=\bar{\beta} \in H_{-1}^{0,1}(\hat{X})$, we have
$$\|\eta\|_{\mathrm{AGY}} \leq 2 \frac{\|\eta\|_{\mathrm{Hodge}}}{r}.$$
\end{proposition}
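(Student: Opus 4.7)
The plan is the standard Cauchy--Schwarz plus subharmonic mean value argument for periods of holomorphic $1$-forms on flat surfaces.

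First, I would parametrize $\gamma$ by $|\omega|$-arclength $s\in[0,\ell]$ with $\ell=|\int_\gamma\omega|$, and in local flat coordinates $z$ with $\omega=dz$ write $\beta=f(z)\,dz$ for a meromorphic function $f$ (holomorphic away from the zeros of $\omega$). Then $\int_\gamma\omega=e^{i\theta}\ell$ for some constant $\theta$, $\int_\gamma\beta=e^{i\theta}\int_0^\ell f(z(s))\,ds$, and Cauchy--Schwarz gives
\[\left|\int_\gamma\beta\right|^2 \leq \ell\int_0^\ell|f(z(s))|^2\,ds.\]

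Next, I would bound the right-hand side using subharmonicity of $|f|^2$ on an embedded flat tube of width $r$ around $\gamma$. This tube is embedded in $\hat X$ and avoids all non-endpoint zeros of $\omega$: if a non-endpoint zero $q$ of $\omega$ lay at flat distance less than $r$ from some point on $\gamma$, concatenating a short path from $q$ with a subsegment of $\gamma$ would yield a path from $q$ to an endpoint of $\gamma$ of length less than $2r$, contradicting the shortest-saddle-connection hypothesis. Applying the mean-value inequality $|f(z(s))|^2\leq\frac{1}{\pi r^2}\int_{B(z(s),r)}|f|^2\,dA$ at each $z(s)$, integrating over $s$, and exchanging orders of integration (using that each point of the tube lies within flat distance $r$ of at most a length-$2r$ subarc of $\gamma$) would yield
\[\int_0^\ell|f|^2\,ds \leq \frac{2}{\pi r}\int_{\hat X}|f|^2\,dA = \frac{2}{\pi r}\|\beta\|_{\mathrm{Hodge}}^2.\]
Plugging this into the Cauchy--Schwarz bound and using $\ell\geq 2r$ (since $\gamma$ is itself a saddle connection) gives
\[\frac{|\int_\gamma\beta|}{\ell} \leq \sqrt{\frac{2}{\pi r\ell}}\,\|\beta\|_{\mathrm{Hodge}} \leq \frac{1}{\sqrt{\pi}\,r}\|\beta\|_{\mathrm{Hodge}} < \frac{\|\beta\|_{\mathrm{Hodge}}}{r}.\]

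The main obstacle is the behavior of $f$ near the endpoints of $\gamma$: these are zeros of $\omega$, so $f=\beta/\omega$ has poles there, and in fact $\int_0^\ell|f|^2\,ds$ diverges (with integrand growing like $s^{-4/3}$ near each endpoint in the principal stratum), so the Cauchy--Schwarz step as stated requires regularization. I would address this by splitting $\gamma$ into a middle piece, where the mean-value balls $B(z(s),r)$ avoid both the endpoints and any non-endpoint zero so the tube argument applies verbatim, and two short end pieces treated in the local covering coordinate adapted to the ramification point of $\hat X\to X$. In that coordinate $\beta$ is represented by a bounded holomorphic $1$-form, so the contribution from the end piece reduces to a line integral of a bounded holomorphic function, which a standard Cauchy estimate controls by a constant multiple of $\|\beta\|_{\mathrm{Hodge}}$; this additional term is absorbed by the slack between $1/(\sqrt{\pi}\,r)$ and $1/r$ together with $\ell\geq 2r$, giving the claimed inequality.
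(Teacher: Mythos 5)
Your high-level architecture is actually close to the paper's: both arguments split $\gamma$ into pieces near the two endpoint zeros and a middle piece, control the middle via subharmonicity of $|\beta/\omega|^2$ on flat balls of radius $r$, and handle the ends by a separate estimate at the ramification points (the paper quotes Kahn--Wright for $\bigl|\int_{\gamma'}\beta\bigr|\le 2\|\beta\|_{\mathrm{Hodge}}$, which is essentially your ``bounded form in the covering coordinate'' step). The differences are that the paper uses a pointwise sup bound $|\beta/\omega|\le \|\beta\|_{\mathrm{Hodge}}/(\sqrt{\pi}\,r)$ on the middle piece rather than your Cauchy--Schwarz $L^2$ tube estimate, and combines the two pieces with the mediant inequality rather than additively.

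The genuine gap is your geometric claim that the $r$-tube around $\gamma$ avoids all non-endpoint zeros. The justification fails: if a zero $q$ lies at distance $d<r$ from the point of $\gamma$ at arc length $s$ from the endpoint $a$, concatenation only produces a path from $q$ to $a$ of length $d+s$, and $s$ can be as large as $\ell/2\gg r$; the hypothesis on shortest saddle connections therefore only gives $d\ge 2r-\min(s,\ell-s)$, which is vacuous in the middle of a long saddle connection. The claim itself is false, not just unproven: a surface can contain a saddle connection $\gamma$ of length $4r$ with another zero $Q$ at distance $\epsilon\ll r$ from its midpoint while the shortest saddle connection still has length $2r$ (the saddle connections from $Q$ to the endpoints of $\gamma$ have length about $\sqrt{4r^2+\epsilon^2}\ge 2r$, and all others can be made long). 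Near such a $Q$ the function $f=\beta/\omega$ has a pole, $|f|^2$ is not subharmonic on your tube, and the estimate $\int_0^\ell |f|^2\,ds\le \tfrac{2}{\pi r}\|\beta\|_{\mathrm{Hodge}}^2$ breaks down. The radius-$r$ balls are legitimately singularity-free only along the portion of $\gamma$ within arc length $r$ of an endpoint; in the genuine middle one must shrink the radius to $d(p,\Sigma)$, which is what the pointwise bound the paper cites from Kahn--Wright is built on.

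A second, quantitative problem: because you combine middle and end contributions additively, your end estimate must come with an explicit constant. You need $C\|\beta\|_{\mathrm{Hodge}}$ with $\tfrac{1}{\sqrt{\pi}}+\tfrac{C}{2}\le 1$, i.e.\ $C\le 2\bigl(1-1/\sqrt{\pi}\bigr)\approx 0.87$, and an unspecified ``standard Cauchy estimate'' will not automatically deliver that; the phrase ``absorbed by the slack'' is doing real work that is not backed by a computation. The paper sidesteps this entirely by estimating each piece as a ratio ($\bigl|\int_{\gamma'}\beta\bigr|\le 2\|\beta\|_{\mathrm{Hodge}}$ against $\bigl|\int_{\gamma'}\omega\bigr|\ge 2r$) and then applying $\frac{|A|+|B|}{|C|+|D|}\le\max\left\{\frac{|A|}{|C|},\frac{|B|}{|D|}\right\}$, which yields exactly $\max\{1/r,\,1/(\sqrt{\pi}r)\}=1/r$ with no constants left to absorb. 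You should restructure your final step along those lines.
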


\begin{proof} 
	We shall endow the surface with the metric defined by $|\omega|$. 
Let $\Sigma$ be the set of zeros of $\omega$.
Given a  saddle connection $\gamma$ of $\omega$, we can decompose $\gamma$ into two parts. Either a segment of $\gamma$ is contained in the disk $D(p,r)$ of radius $r$ centered at a $p\in \Sigma$ and such a segment intersects with
$D(p, r/2)$; or the segment is outside $D(p,r/2)$ for all $p\in \Sigma$.
We denote the two parts by $\gamma'$ and $\gamma''$.

It is not necessary that $\gamma'$ or $\gamma''$ is connected. We write
$$\gamma'=\bigcup_i \gamma_i' \ \mathrm{and} \ \gamma''=\bigcup_j \gamma_j'',$$
where $\gamma'_i ,\gamma_j''$ denote the connected components. 

For each $\gamma_i'$, there is a unique zero (of order $2$) $z_i$ of $\omega$ such that 
$\gamma_i$ is contained in the disk $D(z_i,r)$ and $\gamma_i$ intersects with $D(z_i,r/2)$.  It follows from  \cite[Lemma 3.2]{KW21} that, for $z$ in $D(z_0,r)$, 
$$\left| \int_{z_i}^z \beta \right| \leq \|\beta\|_{\mathrm{Hodge}}. $$
As  a result, 
$$\left|\int_{\gamma_i'}\beta\right| \leq 2 \|\beta\|_{\mathrm{Hodge}}.$$
Since $\gamma$ crosses the annulus $D(z_i ,r) \setminus D(z_i, r/2)$, 
$\left|\int_{\gamma_i'}\omega\right| \geq r$. This implies 

\begin{equation}\label{equ:zero}
	\frac{\left|\int_{\gamma_i'}\beta\right|}{\left|\int_{\gamma_i'}\omega\right|} \leq 2 \frac{\|\beta\|_{\mathrm{Hodge}}}{r}.
\end{equation}

Now we consider $\gamma_j''$. We have
\begin{eqnarray*}
\left|\int_{\gamma_j''} \beta\right| \leq \int_{\gamma_j''} \left| \frac{\beta}{\omega}\right| \left|\omega\right|.
\end{eqnarray*}
We give an upper bound for $\frac{\beta}{\omega}$. Let $x_0\in \gamma_j''$. 
Note that there is a disk $D(x_0, r/2)$ of radius $r/2$ around $x_0$,
which does not contain any zeros of $\omega$.

Let $z$ be the natural coordinate of $\omega$ on $D(z_0, r/2)$, where $z_0=z(x_0)$.
In $D(z_0, r/2)$, we have $\omega=d z$. And $\frac{\beta}{\omega}(z)$ defines a holomorphic function on $D(z_0, r/2)$.

By the mean-value inequality of subharmonic function, we have 
\begin{eqnarray*}
\left|\frac{\beta}{\omega}(z_0) \right|^2 &\leq& \frac{4 \int_{D(z_0, r/2)}\left|\frac{\beta}{\omega}(z) \right|^2 dxdy}{\pi r^2}.\\
\end{eqnarray*}

Thus  we have
\begin{eqnarray*}
	\left|\frac{\beta}{\omega}(z_0) \right| &\leq& \frac{2 \left(\int_{D(z_0, r/2)}\left|\frac{\beta}{\omega}(z) \right|^2 dxdy\right)^{1/2}}{\sqrt{\pi} r}\\
	&=& \frac{2 \left(\int_{D(z_0, r/2)}\left|\beta \right|^2 \right)^{1/2}}{\sqrt{\pi} r}\\
&\leq& \frac{2 \|\beta\|_{\mathrm{Hodge}}}{\sqrt \pi r}.
\end{eqnarray*}

As a result, we show

\begin{equation}\label{equ:one}
\frac{\left|\int_{\gamma''} \beta\right|}{\left|\int_{\gamma''} \omega \right|} \leq \max_{\gamma''} \left| \frac{\beta}{\omega}\right| 
\leq  2 \frac{\|\beta\|_{\mathrm{Hodge}}}{\sqrt{\pi} r},
\end{equation}

Combining \eqref{equ:one} with \eqref{equ:zero}, we have

\begin{eqnarray*}
\frac{\left|\int_{\gamma} \beta\right|}{\left|\int_{\gamma} \omega \right|}
&\leq&  \frac{\sum_i \left|\int_{\gamma_i'} \beta\right|+ \sum_j \left|\int_{\gamma_j''} \beta\right|}{\sum_i \left|\int_{\gamma_i'} \omega\right|+ \sum_j
\left|\int_{\gamma_j''} \omega\right|} \\
&\leq& \max_{i,j}\left\{ \frac{\left|\int_{\gamma_i'} \beta\right|}{\left|\int_{\gamma_i'} \omega\right|}, \frac{\left|\int_{\gamma_j''} \beta\right|}{\left|\int_{\gamma_j''} \omega\right|}\right\} \\
&\leq & 2 \max\left\{\frac{ \|\beta\|_{\mathrm{Hodge}}}{r}, \frac{\|\beta\|_{\mathrm{Hodge}}}{\sqrt{\pi} r}\right\} \\
&=& 2 \frac{ \|\beta\|_{\mathrm{Hodge}}}{r} .
\end{eqnarray*}

\end{proof}









\bibliographystyle{plain}

\bibliography{bibliography}

\end{document}